\documentclass[a4paper,notitlepage,twoside,leqno,11pt]{amsart}

\usepackage{bbm,pifont,latexsym}
\usepackage{anysize}
\marginsize{2.8cm}{2.8cm}{2.5cm}{2.5cm}

\usepackage{dcolumn,indentfirst}
\usepackage[colorlinks=false,linkcolor=black,citecolor=black,urlcolor=black,bookmarksopen=true,linktocpage=true,pdfstartview={XYZ null null 1.25}]{hyperref}
\usepackage{amsmath,amssymb,amscd,amsthm,amsfonts,mathrsfs}
\usepackage{color,graphicx,xcolor,graphics,subfigure,extarrows,caption2}
\usepackage{titletoc}

\usepackage{thm-restate}

\newtheorem{thm}{Theorem}[section]
\newtheorem{lema}[thm]{Lemma}
\newtheorem{cor}[thm]{Corollary}
\newtheorem{prop}[thm]{Proposition}

\newtheorem*{mainthm}{Main Theorem}

\theoremstyle{definition}

\newcommand{\EC}{\widehat{\mathbb{C}}}
\newcommand{\A}{\mathbb{A}}
\newcommand{\C}{\mathbb{C}}

\newcommand{\N}{\mathbb{N}}

\newcommand{\T}{\mathbb{T}}

\newcommand{\ii}{\textup{i}}

\newcommand{\Crit}{\textup{Crit}}

\newcommand{\Int}{\textup{int}}
\newcommand{\Ext}{\textup{ext}}

\newcommand{\subt}{\smallsetminus}

\usepackage{enumerate,enumitem}

\begin{document}

\author{FEI YANG}
\address{Fei Yang, Department of Mathematics, Nanjing University, Nanjing, 210093, P. R. China}
\email{yangfei\rule[-2pt]{0.2cm}{0.5pt}math@163.com}

\title{RATIONAL MAPS WITHOUT HERMAN RINGS}

\begin{abstract}
Let $f$ be a rational map with degree at least two. We prove that $f$ has at least $2$ disjoint and infinite critical orbits in the Julia set if it has a Herman ring.
This result is sharp in the following sense: there exists a cubic rational map having exactly two critical grand orbits but also having a Herman ring. In particular, $f$ has no Herman rings if it has at most one infinite critical orbit in the Julia set. These criterions derive some known results about the rational maps without Herman rings.
\end{abstract}

\subjclass[2010]{Primary 37F45; Secondary 37F10, 37F30}

\keywords{Herman rings, quasiconformal surgery, Julia set, Fatou set}

\date{\today}



\maketitle

{\setcounter{tocdepth}{1}
\tableofcontents
}

\section{Introduction}

Let $f$ be a rational map with degree at least two. The \emph{Fatou set} $F(f)$ of $f$ is defined as the maximal open subset on the Riemann sphere $\EC$ in which the sequence of iterates $\{f^{\circ n}\}_{n\geq 0}$ is normal in the sense of Montel. The \emph{Julia set} $J(f):=\EC\subt F(f)$ of $f$ is the complement of the Fatou set in $\EC$. Each connected component of the Fatou set is called a \emph{Fatou component}. A Fatou component $U$ is called \emph{periodic} if there exists a positive integer $n$ such that $f^{\circ n}(U)=U$. According to Sullivan \cite{Sul85}, there are five types of periodic Fatou components: attracting basin, superattracting basin, parabolic basin, Siegel disk and Herman ring. Moreover, all the Fatou components are iterated to one of these five types periodic Fatou components eventually. A periodic Fatou component $U$ is called a \textit{Herman ring} if $U$ is conformally isomorphic to some annulus and if the restriction of $f$, or some iterate of $f$ on $U$, is holomorphically conjugated to an irrational rotation of this annulus.

According to K{\oe}nigs, B\"{o}ttcher, Leau-Fatou and Siegel (see \cite[\S8-\S11]{Mil06}), it is relatively easy to construct a family of rational maps such that each one of them has an attracting basin, superattracting basin, parabolic basin or Siegel disk with fixed multiplier since these four types of periodic Fatou components have an associate periodic point (attracting, superattracting, rational neutral or irrational neutral). However, for the case of Herman ring, because there exists no associate periodic point, it is difficult to judge whether a given rational map (or family) has a Herman ring or not. This is not strange: Herman ring is the last type of periodic Fatou component that has been found. There are two known methods for constructing Herman rings. The original method is based on a careful analysis on the analytic diffeomorphism of the circle, due to Herman \cite{Her79}. An alternative method is based on quasiconformal surgery, by pasting together two Siegel disks, due to Shishikura \cite{Shi87}.

In \cite[\S 2.4]{Lyu86}, Lyubich asked the following question: Does there exist a parameter $\omega\in\C\subt\{0\}$ such that $f_\omega(z)=1+\omega/z^2$ has a Herman ring? In his master's thesis \cite[Theorem 3]{Shi87}, Shishikura proved that any quadratic rational map has no Herman rings and hence answered Lyubich's question. The method of Shishikura is analytical and profound. A completely topological method of proving the non-existence of Herman rings of
\begin{equation}
f_\omega(z)=1+\omega/z^d,
\end{equation}
where $d\geq 2$ and $\omega\in\C\subt\{0\}$, was established by Bam\'{o}n and Bobenrieth \cite{BB99}.

To study the connectivity of the Julia sets of rational maps, one often needs to exclude the existence of Herman rings.
The dynamics of McMullen maps \begin{equation}\label{eq-McMullen}
F_\lambda(z)=z^m+\lambda/z^\ell,
\end{equation}
where $m,\ell\geq 2$ and $\lambda\in\C\subt\{0\}$, has been studied extensively recently (see \cite{DLU05,Ste06,QWY12} and the references therein). By a careful analysis of the dynamical symmetries on $F_\lambda$, Xiao and Qiu proved that $F_\lambda$ has no Herman rings \cite{XQ10}. A much simpler argument of the non-existence of Herman rings of the McMullen maps appeared in \cite{DR13}.

\subsection{Statement of the results}

In this article, we give a general criterion to exclude the existence of Herman rings. Let $f$ be a rational map and $z$ a point on $\EC$. The \emph{forward orbit} and \emph{grand orbit} of $z$ under $f$ are defined as $O_f(z):=\{f^{\circ n}(z)\}_{n\in\N}$ and $GO_f(z):=\{f^{-m}\circ f^{\circ n}(z)\}_{m,n\in\N}$ respectively. A forward orbit $O_f(z)$ is called \textit{critical} if $z$ is a critical point of $f$. The dynamics of a rational map is determined by the critical orbits $\{O_f(c):c\in\Crit(f)\}$, where $\Crit(f)$ is the set of critical points of $f$. It was known that every boundary point of a Herman ring belongs to the closure of the orbit of some critical point (see \cite[Lemma 15.7]{Mil06}). Therefore, one can believe that a rational map has no Herman rings provided it has relatively simple critical orbits. The orbit $O_f(z)$ is called \emph{infinite} if the cardinal number of $O_f(z)$ is equal to $+\infty$, i.e. $f^{\circ m}(z)\neq f^{\circ n}(z)$ if $m\neq n$. In this article, we prove the following Main Theorem.

\begin{mainthm}
Let $f$ be a rational map with degree at least two which has a Herman ring. Then it has at least $2$ disjoint and infinite critical orbits in the Julia set.
\end{mainthm}

In Appendix A of \cite{Mil00}, Milnor generalized Shishikura's result and proved that any rational map with only two critical points (without counting multiplicity) cannot have any Herman rings. Although some similar criterions as stated in the Main Theorem may be known for the experts in this field, we would like to mention that there are no restrictions on the number of critical points in our criterion. In particular, the Main Theorem is sharp in the following sense:

\begin{restatable}{thm}{thmsharpintro}
\label{thm-sharp}
There exist suitable parameters $a$, $b$ and $t$ such that the cubic rational map
\begin{equation}\label{equ-f-a-b}
f_{a,b}(z)=e^{2\pi\ii t}\frac{z-a}{1-\overline{a}z}\left(\frac{z-b}{1-\overline{b}z}\right)^2, \text{ where } |a|>1, |b|<1 \text{ and }t\in(0,1),
\end{equation}
has exactly two disjoint critical grand orbits and has also a Herman ring.
\end{restatable}

The idea of the construction in Theorem \ref{thm-sharp} is as following:  Note that $b$ is a critical point of $f_{a,b}$. We find a suitable $a$ such that the origin is also a critical point of $f_{a,b}$. Then $f_{a,b}$ maps the critical point $b$ to another critical point $0$. The dynamical symmetry guarantees that $f_{a,b}$ maps the critical point $1/\overline{b}$ to another critical point $\infty$. The argument principle indicates that the restriction of $f_{a,b}$ on the unit circle is a real analytic diffeomorphism. For any given Diophantine irrational number $\theta$, we can find a suitable $t$ such that the rotation number of the restriction of $f_{a,b}$ on the unit circle is $\theta$ and hence $f_{a,b}$ has a fixed Herman ring surrounding the unit circle. See Figure \ref{Fig_Julia-Herman-sphere} for an example.

\begin{figure}[!htpb]
  \setlength{\unitlength}{1mm}
  \includegraphics[width=70mm]{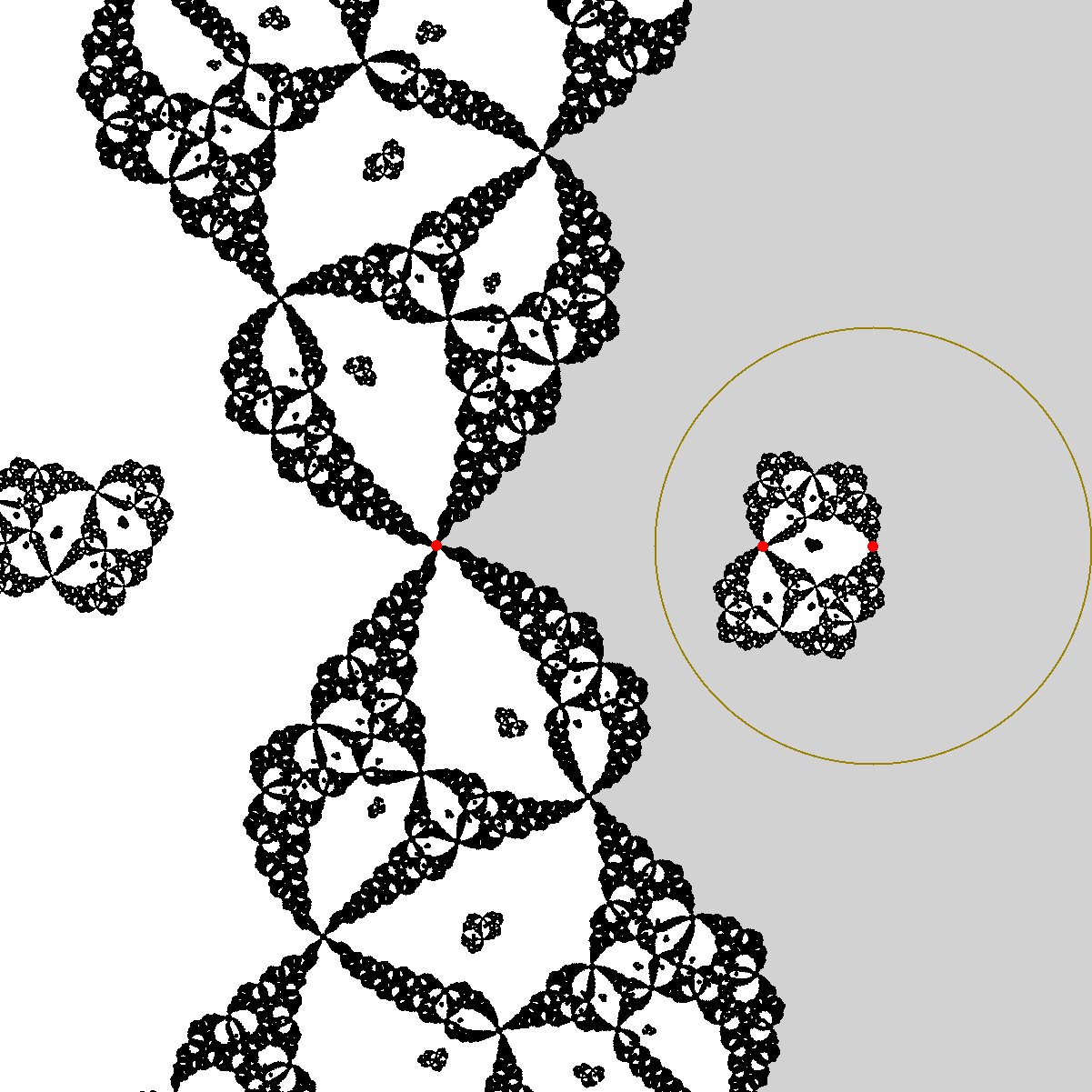}\quad
  \includegraphics[width=70mm]{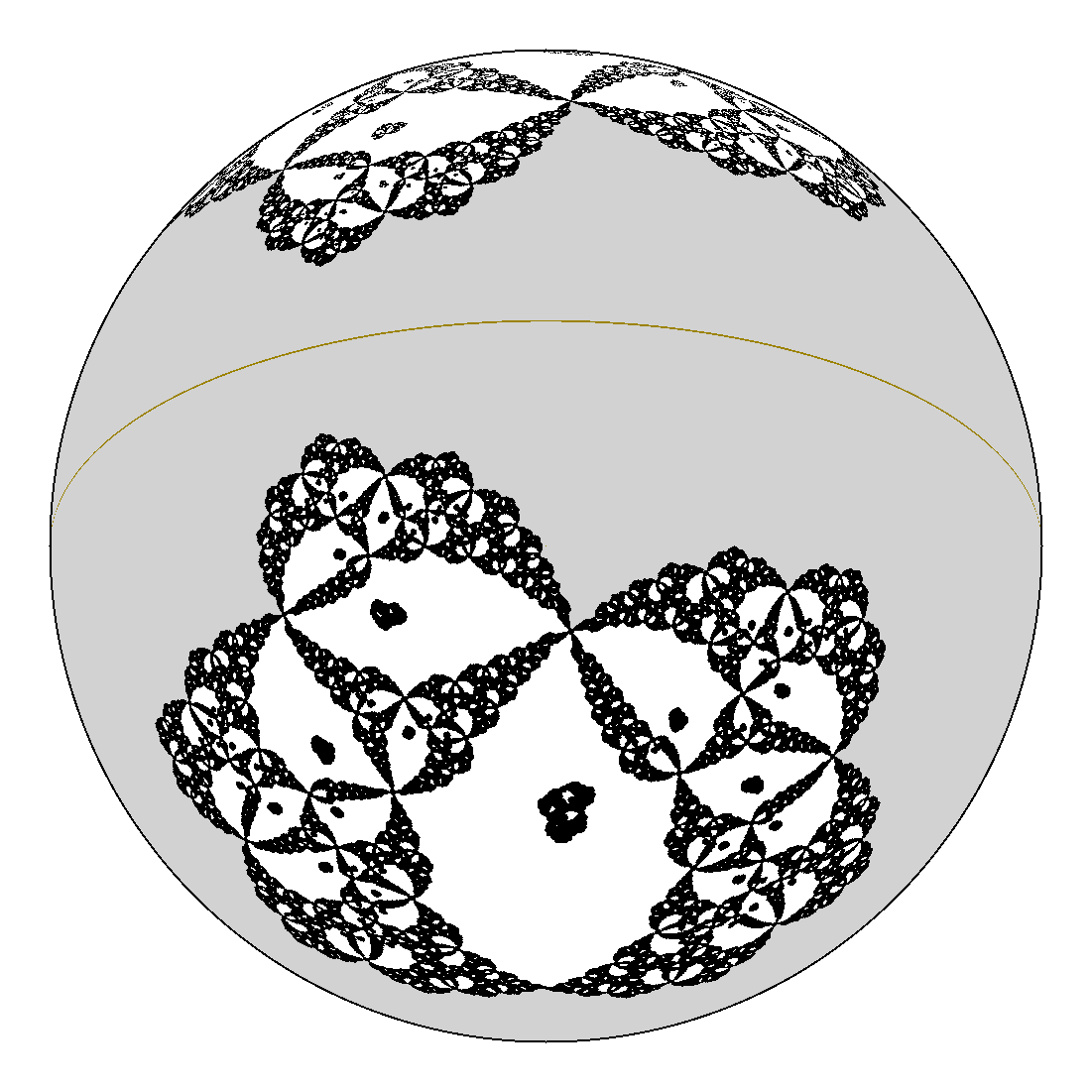}
  \caption{The Julia set of $f_{a,b}$ and its view on the Riemann sphere, where $a=-(3+\sqrt{13})/2$, $b=-1/2$ and $t=0.5865149\cdots$. The parameters are chosen such that $f_{a,b}$ has exactly two disjoint critical grand orbits and a Herman ring (the gray part) surrounding the unit circle (the brown curve) on which the rotation number is the golden mean $(\sqrt{5}-1)/2$.}
  \label{Fig_Julia-Herman-sphere}
\end{figure}

Let $\Lambda$ be a connected complex manifold. The family $f:\Lambda\times\EC\to\EC$ is called a \textit{holomorphic family} of rational maps \textit{parameterized} by $\Lambda$ if $f$ is holomorphic in two variables. In particular, $f_\lambda:\EC\to\EC$ is a rational map for each $\lambda\in\Lambda$. We say that a holomorphic family of rational maps $f:\Lambda\times\EC\to\EC$ has only one \textit{free} critical orbit, if there exists a critical point $c_\lambda$ of $f_\lambda$ such that for any $\widetilde{c}_\lambda\in\Crit(f_\lambda)\subt\{c_\lambda\}$, the forward orbit of $\widetilde{c}_\lambda$ is either finite or has a nonempty intersection\footnote{Two different critical orbits with nonempty intersection are regarded as one critical orbit essentially for convenience since they have the same grand orbit.} with the forward orbit of $c_\lambda$. As an immediate corollary of the Main Theorem, we have the following result:

\begin{thm}\label{thm-holo-fami}
The holomorphic family of rational maps having only one free critical orbit has no Herman rings.
\end{thm}

Now we can give several applications of the Main Theorem and Theorem \ref{thm-holo-fami}. For each $\omega\in\C\subt\{0\}$, the set of critical points of $f_\omega(z)=1+\omega/z^d$ is $\{0,\infty\}$, where $d\geq 2$. By definition, $f_\omega$ has only one free critical orbit $0\mapsto\infty\mapsto 1\mapsto 1+\omega\mapsto\cdots$. As an immediate corollary of Theorem \ref{thm-holo-fami}, we have following result\footnote{Bam\'{o}n and Bobenrieth's result can be obtained also by using Milnor's criterion on bicritical rational maps \cite{Mil00}.}.

\begin{cor}[{Bam\'{o}n and Bobenrieth}]
The rational maps $f_\omega(z)=1+\omega/z^d$ have no Herman rings, where $d\geq 2$ and $\omega\in\C\subt\{0\}$.
\end{cor}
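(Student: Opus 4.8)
The plan is to verify that $f_\omega$ satisfies the hypothesis of the Main Theorem and then invoke it directly. First I would pin down the critical set. Writing $f_\omega(z)=(z^d+\omega)/z^d$ exhibits $f_\omega$ as a rational map of degree $d$, and the derivative $f_\omega'(z)=-d\omega/z^{d+1}$ vanishes at no point of $\C\setminus\{0\}$ since $\omega\neq 0$. Thus the only possible critical points are $0$ and $\infty$. At $z=0$ the map has a pole of order $d$, so $0$ is a critical point of local degree $d\geq 2$; at $z=\infty$ the change of coordinate $w=1/z$ conjugates $f_\omega$ near $w=0$ to $w\mapsto 1+\omega w^d$, whose derivative vanishes to order $d-1$ at $w=0$, so $\infty$ is a critical point of local degree $d\geq 2$. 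The Riemann--Hurwitz relation forces the total ramification to be $2d-2=(d-1)+(d-1)$, which is already exhausted by $0$ and $\infty$; hence $\Crit(f_\omega)=\{0,\infty\}$ exactly.

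Next I would trace the two critical orbits and show they merge. Since $f_\omega(0)=\infty$, we have $O_{f_\omega}^+(0)=\{0\}\cup O_{f_\omega}^+(\infty)$, so in particular $\infty\in O_{f_\omega}^+(0)\cap O_{f_\omega}^+(\infty)$. Consequently, if both $0$ and $\infty$ happen to lie in $J(f_\omega)$ with infinite forward orbits, then their orbits meet, which is exactly the ``eventually'' condition spelled out after the Main Theorem; and if at most one of them lies in the Julia set with an infinite orbit, then there is trivially at most one infinite critical orbit in $J(f_\omega)$. In either case the hypothesis of the Main Theorem is satisfied, and the conclusion that $f_\omega$ has no Herman ring follows immediately.

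The content of this corollary is entirely carried by the Main Theorem; the only points requiring care are genuinely elementary. The one mild subtlety is that $\infty$ is a critical point even though it is invisible to the naive derivative computed on $\C$, which is why I would confirm it through the coordinate change $w=1/z$, and the Riemann--Hurwitz count is what rules out any further, hidden critical points. Once $\Crit(f_\omega)=\{0,\infty\}$ is established and the relation $f_\omega(0)=\infty$ is observed, no real obstacle remains.
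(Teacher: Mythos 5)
Your proposal is correct and takes essentially the same approach as the paper: identify $\Crit(f_\omega)=\{0,\infty\}$, observe that $f_\omega(0)=\infty$ merges the two critical orbits into a single orbit eventually, and apply the Main Theorem. The coordinate change at $\infty$ and the Riemann--Hurwitz count in your write-up merely make explicit the critical-set computation that the paper states without proof.
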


Let $P$ be a non-constant complex polynomial with degree $d\geq 2$. The rational map $N_P:\EC\rightarrow\EC$ defined as $N_P(z)=z-P(z)/P'(z)$ is called the \emph{Newton's method} for $P$. The iterate of $N_P$ is used to find the roots of $P$. Except $\infty$, all the rest $d$ fixed points of $N_P$ are superattracting and they are exactly the roots of $P$. If $d=3$, then $N_P$ has at most one critical point in the Julia set. Therefore, we have following immediate corollary by the Main Theorem.

\begin{cor}[{Shishikura-Tan}]
The Newton's method for a cubic polynomial has no Herman rings.
\end{cor}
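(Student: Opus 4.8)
The plan is to deduce the corollary from the Main Theorem by verifying that Newton's method $N_P$ has at most one infinite critical orbit inside its Julia set. Assume first that the cubic $P$ has three distinct roots $r_1,r_2,r_3$. Writing $N_P=(zP'-P)/P'$, one checks that the numerator and denominator share no common zero (a common zero would be a multiple root of $P$), so $\deg N_P=3$ and, by the Riemann--Hurwitz formula, $N_P$ has exactly $2\cdot 3-2=4$ critical points counted with multiplicity. Since $N_P'(z)=P(z)P''(z)/P'(z)^2$, the critical points in $\C$ are precisely the three roots $r_1,r_2,r_3$ of $P$ together with the single zero $z_0$ of the linear polynomial $P''$; moreover $\infty$ is a repelling fixed point of multiplier $3/2$ and is not critical. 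Hence the four critical points are exactly $r_1,r_2,r_3$ and $z_0$.

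Next I would locate these points with respect to $F(N_P)$ and $J(N_P)$. Each simple root satisfies $N_P(r_i)=r_i$ and $N_P'(r_i)=0$, so $r_1,r_2,r_3$ are superattracting fixed points and therefore lie in the Fatou set with finite forward orbits. Consequently the only critical point that can lie in $J(N_P)$ is the free critical point $z_0$, so $N_P$ has at most one critical point in its Julia set, and in particular at most one infinite critical orbit there (the clause `eventually' being vacuous). The Main Theorem then yields that $N_P$ has no Herman ring. Equivalently, by \cite[Theorem 8.6]{Mi2} the three distinct immediate basins of $r_1,r_2,r_3$ each contain a critical point, so three of the four critical orbits converge to a root and remain in the Fatou set, leaving at most one critical orbit available to meet the Julia set.

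It remains to treat the degenerate configurations. If $P$ has a repeated root $r$, then the factor $z-r$ cancels in $(zP'-P)/P'$ and $N_P$ reduces to a rational map of degree at most two, which carries no Herman ring by Shishikura's theorem \cite[Theorem 3]{Sh1}; and should $z_0$ coincide with one of the roots, the count only improves, since then no critical point lies in $J(N_P)$ at all. I expect the only real work to be this bookkeeping of critical points---confirming that the three roots supply three distinct critical orbits trapped in the Fatou set and that $z_0$ is the sole candidate for a Julia-set critical orbit---after which the conclusion is immediate from the Main Theorem.
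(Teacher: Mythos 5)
Your proposal is correct and takes essentially the same route as the paper: the roots of $P$ are superattracting fixed points of $N_P$, so they account (together with the critical orbits they attract, by \cite[Theorem 8.6]{Mi2}) for three of the four critical points of the degree-three map $N_P$, leaving at most one critical orbit available to lie in $J(N_P)$, whence the Main Theorem applies. You are in fact slightly more careful than the paper, whose brief argument tacitly assumes $P$ has three simple roots; your disposal of the repeated-root case via the degree bound of \cite[Theorem 3]{Sh1} covers the degenerate configurations the paper leaves implicit.
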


Actually, Shishikura proved that the Julia sets of the Newton's method for all polynomials are connected and hence they have no Herman rings \cite[Corollary II]{Shi09}, and Tan proved the cubic case in \cite[Proposition 2.6]{Tan97}.

Some other consequences of Theorem \ref{thm-holo-fami} have also been harvested. See \cite{YZ14} and \cite{XY16} for the study of the connectivity of the Julia sets of the holomorphic families with only one free critical orbit.

For the study of the necessary (or other) conditions of the existence of Herman rings of transcendental meromorphic functions, one can refer \cite{DF04}, \cite{FP12}, \cite{Nay16}, \cite{Zhe00} and the references therein.

\subsection{Organization of the article}

This article is organized as follows: In \S \ref{sec-proof}, we first show that a rational map with exactly one infinite critical orbit cannot have any fixed Herman rings. Then we prove that this rational map cannot have any Herman rings with period great than one and complete the proof of the Main Theorem.

In \S \ref{sec-sharp}, we do some calculations and analysis to find the parameters $a$, $b$ and $t$ such that the rational map $f_{a,b}$ defined in \eqref{equ-f-a-b} satisfies the properties in Theorem \ref{thm-sharp}.

In \S \ref{sec-example}, we show that there are lots of other families of rational maps satisfying the condition in Theorem \ref{thm-holo-fami}, including the generalized renormalization transformation family etc. Moreover, at the end of this section, we give another proof that the McMullen maps have no Herman rings by using Theorem \ref{thm-holo-fami}.

\vskip0.2cm
\noindent\textbf{Acknowledgements.} This work is supported by the NSFC under grant No.\,11401298 and the NSF of Jiangsu Province under grant No.\,BK20140587. The author would like to thank the referee for careful reading and useful suggestions.

\section{Proof of the Main Theorem}\label{sec-proof}

Let $f$ be a rational map with degree at least two. It is known that the boundary of the Herman ring has two connected components and each of them is an infinite set. If all the critical orbits in $J(f)$ are finite, then $J(f)$ has no Herman rings since every boundary point of a Herman ring is contained in the closure of some critical orbit in $J(f)$ (see \cite[Lemma 15.7]{Mil06}). Therefore, in the following, we always assume that $f$ has at least one infinite critical orbit in its Julia set.

\subsection{The fixed Herman rings}

We first discuss the case when the Herman rings are fixed. In this case, the proof of the Main Theorem is based on quasiconformal surgery. One can refer \cite{Shi87,Shi06} and \cite{BF14} for the basic ideas.

A continuous map $f:\EC\rightarrow\EC$ is called \emph{quasiregular} if it can be written as $f=\phi\circ g\circ \psi$, where $g$ is rational and $\phi,\psi$ are both quasiconformal mappings. For $s>0$ and $0<r<1$, let $\T_s:=\{z:|z|=s\}$ be the circle centered at the origin with radius $s$ and $\A_r:=\{z:r<|z|<1/r\}$ the annulus which is symmetric to the unit circle $\T_1$. For a Jordan curve $\gamma\subset\EC$ that does not pass through $\infty$, we use $\gamma_{\Ext}$ to denote the component of $\EC\subt \gamma$ containing $\infty$ and $\gamma_{\Int}$ the other. Let $A\subset\EC$ be an annulus with finite module such that $\infty\not\in\overline{A}$. Recall that the \textit{core curve} $\gamma_A$ of $A$ is defined as $\varphi^{-1}(\T_1)$, where $\varphi:A\rightarrow \A_r$ is a conformal isomorphism. Note that $\gamma_A$ is a smooth Jordan curve and it divides the Riemann sphere into two disjoint disks $A^{\Ext}:=(\gamma_A)_{\Ext}$ and $A^{\Int}:=(\gamma_A)_{\Int}$. We use $\partial_+A$ and $\partial_-A$ to denote the outer and inner boundary components of $A$ which are contained in $A^{\Ext}$ and $A^{\Int}$ respectively.

We say that a rational map $f$ has exactly one infinite critical orbit in $J(f)$ \textit{eventually} if there is a critical point $c$ in $J(f)$ such that the forward orbit of $c$ is infinite and for any other critical point $\widetilde{c}\in J(f)$, either the orbit $O_f(\widetilde{c})$ is finite or $\widetilde{c}$ has the same grand orbit as $c$, i.e. $f^{\circ i}(c)=f^{\circ j}(\widetilde{c})$ for some $i,j\in\N$.

\begin{thm}\label{thm-main2}
Let $f$ be a rational map with exactly one infinite critical orbit in $J(f)$ eventually. Then $f$ has no Herman rings with period one.
\end{thm}

\begin{proof}
We give the proof by contradiction. Suppose that $f$ has a fixed Herman ring $U$. Without loss of generality, we assume that $\infty\not\in\overline{U}$ since $\EC\subt\overline{U}\neq \emptyset$. By the definition of Herman ring, there exists a conformal map $\varphi:U\rightarrow\A_r$ and an irrational number $\theta\in(0,1)$ such that $\varphi\circ f\circ\varphi^{-1}(\zeta)=e^{2\pi\ii\theta}\zeta$. Then $f$ maps the outer and inner boundary components $\partial_+U$ and $\partial_-U$ to themselves respectively.
Let $\beta=\varphi^{-1}(\T_a)$ and $\eta=\varphi^{-1}(\T_b)$, where $r<a<b<1/r$. Then $\beta$ and $\eta$ are both analytic curves contained in $U$. Without loss of generality, we assume that $\beta\subset\eta_{\Int}$. Otherwise, one can use the map $1/\varphi$ to replace $\varphi$.

The conformal map $\varphi:\beta_{\Ext}\cap\eta_{\Int}\rightarrow\{\zeta:a<|\zeta|<b\}$ between these two annuli can be extended to a quasiconformal mapping defined from $\eta_{\Int}$ onto $\{\zeta:|\zeta|<b\}$, which we denote also by $\varphi$. Define
\[
g(z)=
\left\{
\begin{array}{ll}
f(z)  &~~~~~~~\text{if}~z\in \overline{\eta_{\Ext}}, \\
\varphi^{-1}(e^{2\pi\ii\theta}\varphi(z)) &~~~~~~\text{if}~z\in \eta_{\Int}.
\end{array}
\right.
\]
It is easy to see that $g$ is continuous on $\eta$ and hence it is quasiregular.
Let $\sigma_0$ be the standard complex structure on $\EC$. We use $\sigma$ to denote the $g$-invariant complex structure such that
\[
\sigma=
\left\{
\begin{array}{ll}
\varphi^*\sigma_0  &~~~~~~~\text{in~} \eta_{\Int}, \\
\sigma_0 &~~~~~~\text{outside of~} \bigcup_{n\geq 0}g^{-n}(\eta_{\Int}).
\end{array}
\right.
\]
By the measurable Riemann mapping theorem, there exists a quasiconformal mapping $h:\EC\to\EC$ integrating the almost complex structure $\sigma$ such that $h^*\sigma_0=\sigma$ and $h:(\EC,\sigma)\rightarrow (\EC,\sigma_0)$ is an analytic isomorphism. This means that $F:=h\circ g\circ h^{-1}:(\EC,\sigma_0)\to(\EC,\sigma_0)$ is a rational map.

Since $F(h(\overline{\eta_{\Int}}))=h(\overline{\eta_{\Int}})$, it means that $\{F^{\circ n}\}_{n\geq 0}$ is normal in $h(\eta_{\Int})$ and hence $h(\eta_{\Int})$ is contained in a Fatou component of $F$. Note that the conformal map $\varphi\circ h^{-1}:h(U\subt\overline{\eta_{\Int}})\rightarrow\{\zeta:b<|\zeta|<1/r\}$ conjugates $F$ to the rigid rotation $\zeta\mapsto e^{2\pi\ii\theta}\zeta$. This means that $h(U\cup \eta_{\Int})$ is contained in a Fatou component of $F$. We claim that $h(\partial_+U)$ is contained in the Julia set of $F$. Actually, let $z_0$ be any point in $\partial_+U$ and $V$ a neighborhood of $z_0$ such that $\infty\not\in\overline{V}$ and $\overline{V}\cap\overline{\eta_{\Int}}=\emptyset$. If $f^{\circ n}(z)\not\in\eta_\Int$ for any $z\in V$ and $n\geq 0$, then $\{F^{\circ n}\}_{n\in\N}=\{f^{\circ n}\}_{n\in\N}$ is not normal in $V$ since $z_0$ is contained in the Julia set $f$. Suppose that $f^{\circ n}(z)\in\eta_\Int$ for some $z\in V$ and $n\geq 0$. For any topological disk $\Omega\subset V$ which contains $z_0$ and $z$ with $\overline{\Omega}\subset V$, the sequence $\{F^{\circ n}\}_{n\in\N}$ is not normal in $\Omega$ since $\{F^{\circ n}\}_{n\in\N}$ has a subsequence converging locally uniformly to the identity in $h(\Omega\cap U)$. By the arbitrary of $z_0$ and $V$, the claim has been proved and hence $\deg(F)\geq 2$. This means that $h(U\cup \eta_{\Int})$ is a fixed Siegel disk of $F$.

The images of the critical points in the Fatou set of $f$ under $h$ are still in the Fatou set of $F$ (Note that the images of some critical points of $f$ under $h$ are not critical points of $F$ any more because of the surgery). By the assumption in the theorem, there exists at most one infinite critical orbit in $J(F)$ eventually. On the other hand, it was known that every boundary point of a Siegel disk is contained in the closure of some critical orbit in the Julia set (see \cite[Theorem 11.17]{Mil06}). Therefore, $F$ has exactly one infinite critical orbit in $J(F)$ eventually. In particular, there exists a critical point $\widetilde{c}$ of $F$ with infinite orbit in $J(F)$ such that $\widetilde{c}=h(c)$, where $c$ is a critical point of $f$ with infinite orbit in $J(f)$.

Note that the two boundary components of $U$ are contained in the closure of the forward orbit of $c$. There exists a smallest integer $k>0$ such that $f^{\circ k}(c)\in \eta_{\Int}$. Then we have
\begin{equation*}
F^{\circ k}(\widetilde{c})=F^{\circ k}\circ h(c)=h\circ g^{\circ k}(c)=h\circ f^{\circ k}(c).
\end{equation*}
However, $F^{\circ k}(\widetilde{c})$ lies in the Julia set of $F$ while $h\circ f^{\circ k}(c)\in h(\eta_{\Int})$ lies in a Siegel disk of $F$. This is a contradiction. The proof is complete.
\end{proof}

From the proof of Theorem \ref{thm-main2}, we have the following immediate corollary. See Figure \ref{Fig_cor-crit}.

\begin{cor}\label{cor-after-main}
$(1)$ Let $U$ be a fixed Herman ring of $f$ such that $\infty\not\in\overline{U}$. There exist two critical points $c_1\in U^{\Ext}\cap J(f)$ and $c_2\in U^{\Int}\cap J(f)$ with disjoint infinite critical orbits, such that $O_f(c_1)\subset U^{\Ext}$ and $O_f(c_2)\subset U^{\Int}$.

$(2)$ Let $U_1$ and $U_2$ be two fixed Herman rings of $f$ such that $\infty\not\in\overline{U}_1\cup \overline{U}_2$ and $U_1\subset U_2^\Int$. Then there exists a critical point $c\in U_1^{\Ext}\cap U_2^{\Int}\cap J(f)$ with infinite critical orbit, such that $O_f(c)\subset U_1^{\Ext}\cap U_2^{\Int}$.
\end{cor}

\begin{figure}[!htpb]
  \setlength{\unitlength}{1mm}
 \includegraphics[width=130mm]{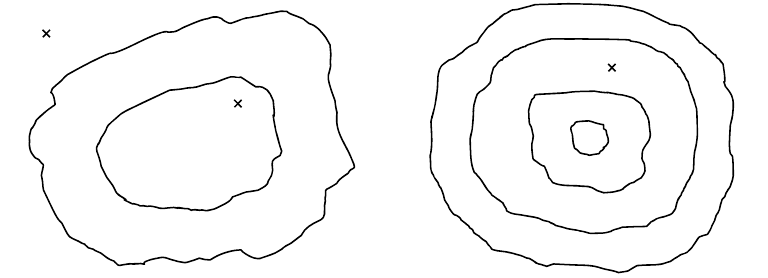}
  \put(-24,33){$c$}
  \put(-46,40){$U_2$}
  \put(-38,26){$U_1$}
  \put(-105,34){$U$}
  \put(-92,25){$c_2$}
  \put(-120,40){$c_1$}
  \caption{The positions of the critical points described in Corollary \ref{cor-after-main}.}
  \label{Fig_cor-crit}
\end{figure}

\subsection{The periodic Herman rings}

If $f$ has a Herman ring with period $p\geq 2$, we cannot use the surgery showed in Theorem \ref{thm-main2} directly. However, we can use Corollary \ref{cor-after-main} after considering the iteration $f^{\circ p}$. Note that the number of disjoint infinite critical orbits of $f^{\circ p}$ in $J(f^{\circ p})=J(f)$ is greater than one now.
Before stating of the non-existence of periodic Herman rings, we first need the following result.

\begin{lema}\label{lema-p}
For any $p\geq 1$, the collection of disjoint Jordan curves $\{\gamma_1,\cdots,\gamma_p\}$ divides $\EC$ into $p+1$ connected components.
\end{lema}

\begin{proof}
Obviously, lemma holds for $p=1$. Suppose that the collection of disjoint Jordan curves $\{\gamma_1,\cdots,\gamma_{p-1}\}$ divides $\EC$ into $p$ connected components. Let $\gamma_p$ be another Jordan curve satisfying $\gamma_p\cap\gamma_i=\emptyset$ for $1\leq i\leq p-1$. Then $\gamma_p$ is contained in a component of $\EC\subt\bigcup_{i=1}^{p-1}\gamma_i$ and $\gamma_p$ divides this component into exactly two components. Therefore, $\EC\subt\bigcup_{i=1}^p\gamma_i$ consists of $p+1$ components.
\end{proof}

\begin{thm}\label{thm-main3}
Let $f$ be a rational map with exactly one infinite critical orbit in $J(f)$ eventually. Then $f$ has no Herman rings with period $p\geq 2$.
\end{thm}

\begin{proof}
Suppose that $f$ has a cycle of periodic Herman rings $U_1\mapsto U_2\mapsto\cdots\mapsto U_p\mapsto U_1$ with period $p\geq 2$. Without loss of generality, considering the conjugacy of $f$ if necessary, we assume that $\infty\not\in\bigcup_{i=1}^p\overline{U}_i$ and $U_i\subset U_1^{\Int}$ for all $2\leq i\leq p$. Considering the iterate of $f$, we know that each $U_i$ is fixed by $f^{\circ p}$, where $1\leq i\leq p$.

Let $\gamma_i$ be the core curve of $U_i$, where $1\leq i\leq p$. Then $\{\gamma_1,\cdots,\gamma_p\}$ forms a family of disjoint Jordan curves and it divides $\EC$ into $p+1$ components, say $V_1$, $\cdots$, $V_{p+1}$. According to Corollary \ref{cor-after-main}, there are $p+1$ critical points of $f^{\circ p}$, say $c_1$, $\cdots$, $c_{p+1}$, such that each $c_i\in V_i\cap J(f^{\circ p})$ has an infinite critical orbit which is contained in $V_i$. Therefore, $f^{\circ p}$ has at least $p+1$ disjoint infinite critical orbits in $J(f^{\circ p})$. However, by the assumption that $f$ has exactly one infinite critical orbit in $J(f)$ eventually, its iterate $f^{\circ p}$ has exactly $p$ infinite critical orbits in $J(f^{\circ p})$ eventually. This is a contradiction and completes the proof.
\end{proof}

\begin{proof}[{Proof of the Main Theorem}]
This is an immediate corollary of Theorems \ref{thm-main2} and \ref{thm-main3}.
\end{proof}

\section{The sharpness of the Main Theorem}\label{sec-sharp}

We have proved in the last section that a rational map with a Herman ring must have at least two disjoint infinite critical orbits in its Julia set. In the present section, we construct an example to show that the Main Theorem is sharp: there exists a cubic rational map having exactly two critical grand orbits but also having a Herman ring, as stated in the introduction.

\thmsharpintro*

\begin{proof}
In order to simplify the calculation, we assume that $a$ is real and set $b=-1/2$. A direct calculation shows that
\begin{equation*}
f_{a,b}'(z)=e^{2\pi\ii t}\,\frac{(1+2z)(2-6a-2a^2+(11+a^2)z+(2-6a-2a^2)z^2)}{(2+z)^3(az-1)^2}.
\end{equation*}
We then set $2-6a-2a^2=0$, i.e. $a=-(3+\sqrt{13})/2$ since $|a|>1$. Note that $f_{a,b}$ is a Blaschke product whose dynamical behaviors are symmetric about the unit circle. It follows that $f_{a,b}$ has $4$ critical points $b=-1/2$, $1/\overline{b}=-2$, $0$ and $\infty$ with critical orbits $b\mapsto 0\mapsto f_{a,b}(0)\mapsto\cdots$ and $1/\overline{b}\mapsto \infty\mapsto f_{a,b}(\infty)\mapsto\cdots$. On the other hand, $f_{a,b}$ has two zeros $b$ (counted with multiplicity) and one pole $1/\overline{a}$ in the unit disk. It follows from the argument principle that the restriction of $f_{a,b}$ on the unit circle is a real analytic diffeomorphism for any $t\in(0,1)$.

For any Diophantine number $\theta\in(0,1)$, there exists a unique $t\in(0,1)$ such that the rotation number of $f_{a,b}$ on the unit circle is $\theta$ (see \cite[p.\,32]{dMvS93}). By Herman-Yoccoz's linearization theorem \cite[Theorem 1.4]{Yoc02}, $f_{a,b}$ can be complex analytically conjugated to the rigid rotation $\zeta\mapsto e^{2\pi\ii\theta}\zeta$ in a neighborhood of the unit circle. Note that $0$ and $\infty$ are both critical points that cannot contained in the rotation domains. This means that $f_{a,b}$ has a fixed Herman ring surrounding the unit circle. This completes the proof of Theorem \ref{thm-sharp}.
\end{proof}

Actually, the Blaschke product $f_{a,b}$ in Theorem \ref{thm-sharp} can be regarded as a map obtained by quasiconfomal surgery. Indeed, let
\begin{equation}
f_c(z)=e^{2\pi\ii\theta}\frac{z^2}{z-c}+2c, \text{ where } c\neq 0.
\end{equation}
If $\theta$ is of bounded type, then $f_c$ has a Siegel disk $\Delta_\infty$ centered at the infinity whose boundary contains at least one critical point\footnote{Yampolsky and Zakeri proved that the boundary of the bounded type Siegel disk of any quadratic rational map contains at least one critical point \cite{YZ01} and Zhang generalized the result to all rational maps with degree at least two in \cite{Zha11} by quasiconformal surgery. Graczyk and \'{S}wi\c{a}tek proved a more general result in \cite{GS03} by using Schwarzian derivative: if an analytic function has a Siegel disk properly contained in the domain of holomorphy and the rotation number is of bounded type, then the corresponding boundary of the Siegel disk contains a critical point.} (see \cite{YZ01}, \cite{Zha11} or \cite{GS03}). Indeed, since $f_c$ has exactly two critical points $0$ and $2c$, and $f_c(0)=2c$, thus $\partial\Delta_\infty$ contains at least the critical point $2c$. It means that $f_c$ is a quadratic rational map having exactly one critical grand orbit and having also a Siegel disk centered at the infinity.

By a standard Siegel-to-Herman surgery under the symmetric normalization (see \cite[\S 9]{Shi87}, \cite[\S 7]{BF14} and \cite[\S 4]{Zak10}, the inverse process, Herman-to-Siegel surgery, has appeared in the proof of Theorem \ref{thm-main2}), one can obtain a cubic Blaschke product $\widetilde{f}$ such that $\widetilde{f}$ has exactly two disjoint critical grand orbits and has also a Herman ring surrounding the unit circle. After normalizing the positions of the critical points such that $\widetilde{f}$ maps a critical point to the origin, the map $\widetilde{f}$ must have the form in Theorem \ref{thm-sharp}. See Figure \ref{Fig_Julia-Siegel-Herman}.

\begin{figure}[!htpb]
  \setlength{\unitlength}{1mm}
  \includegraphics[width=70mm]{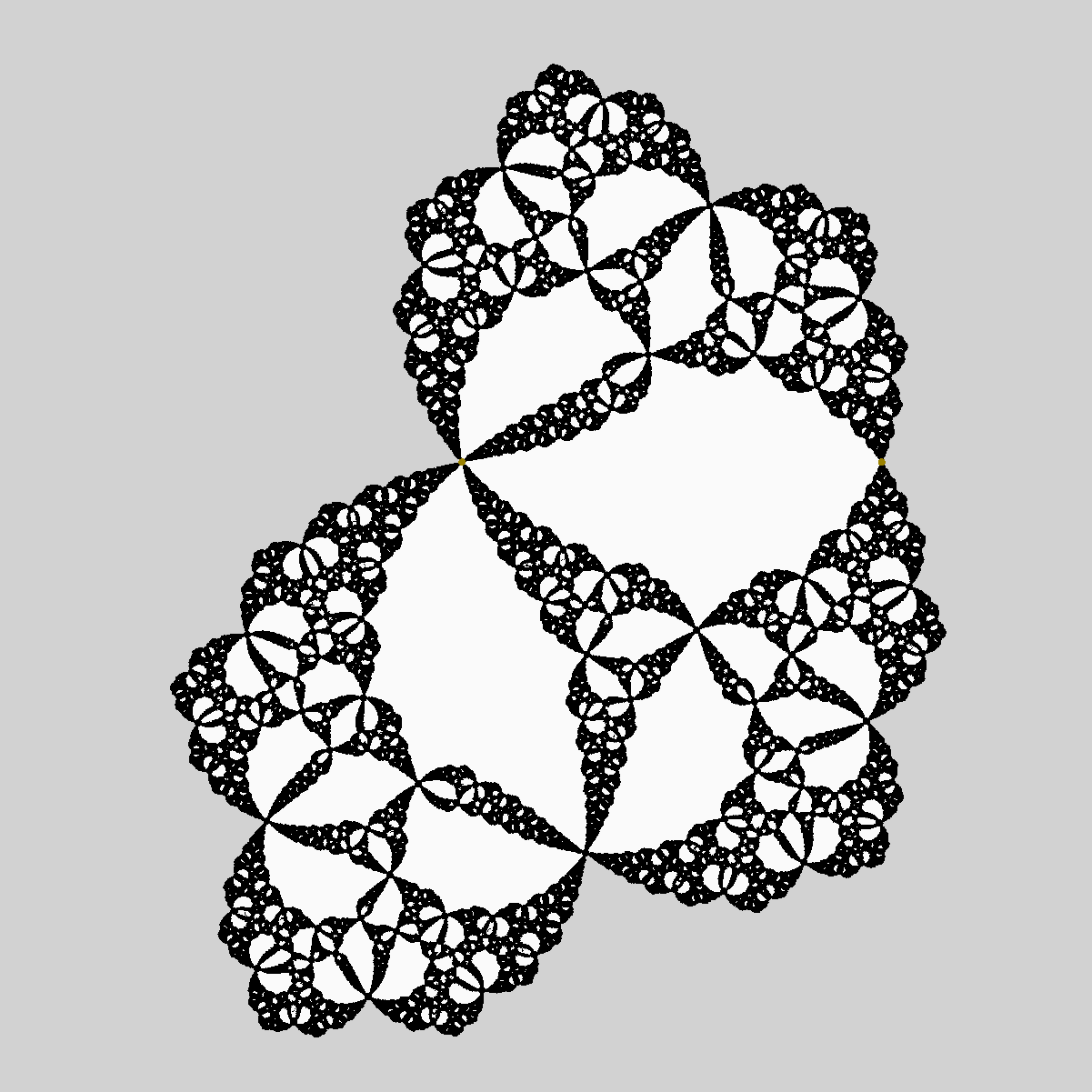}\quad
  \includegraphics[width=70mm]{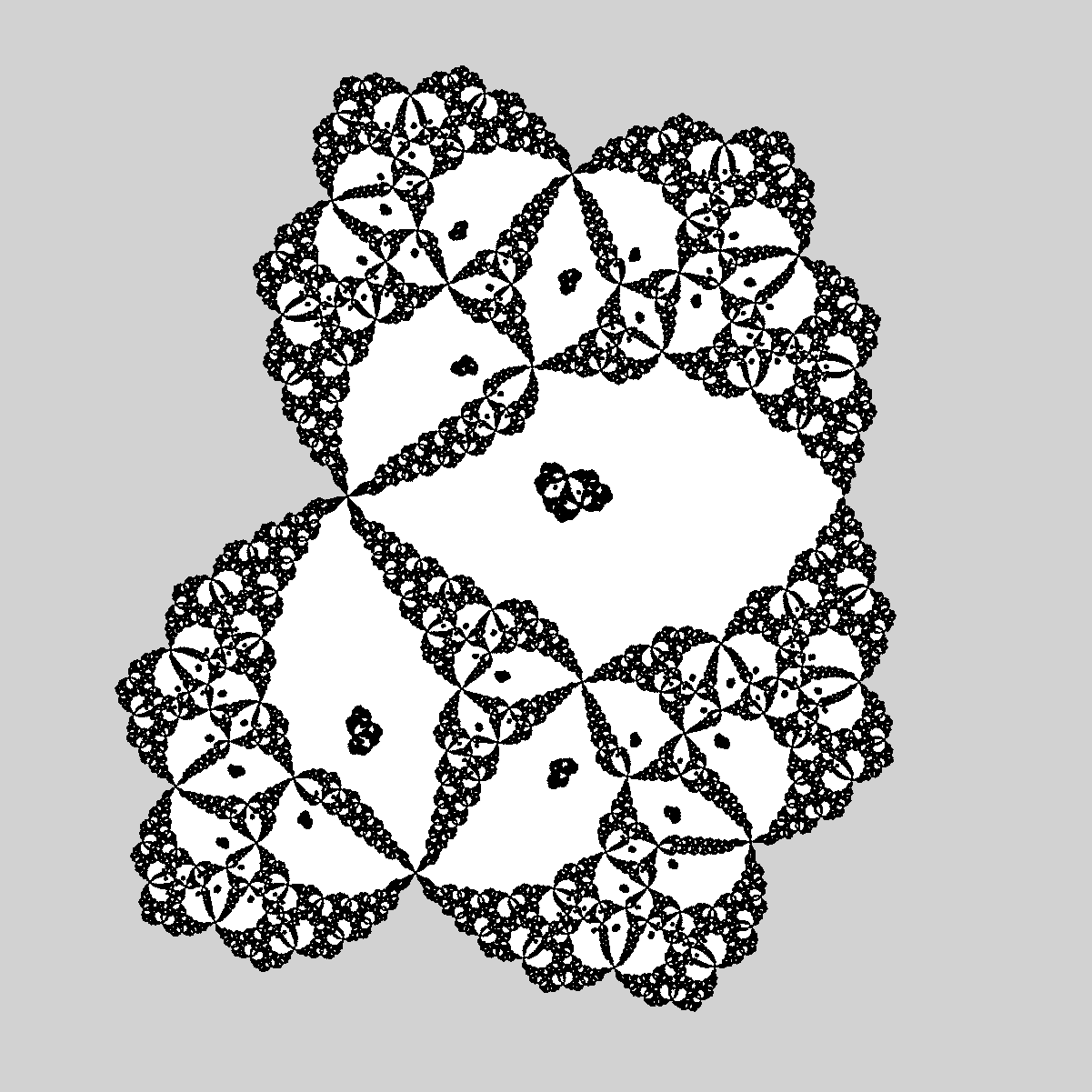}
  \caption{The Julia sets of $f_c$ and $f_{a,b}$, where $c=1/2$ such that $f_c$ has a fixed Siegel disk (the gray part on the left) and $a=-(3+\sqrt{13})/2$, $b=-1/2$ and $t=0.5865149\cdots$ such that $f_{a,b}$ has a fixed Herman ring (the gray part on the right). Both of the rotation numbers in the rotation domains are chosen to be the golden mean. The black parts on the right are subsets of the Julia set of $f_{a,b}$ which are contained in the unit disk. Compare Figure \ref{Fig_Julia-Herman-sphere}.}
  \label{Fig_Julia-Siegel-Herman}
\end{figure}

\section{Applications: rational maps having no Herman rings}\label{sec-example}

In this section, we give some examples such that one can apply the Main Theorem or Theorem \ref{thm-holo-fami}. In complex dynamics, one may want to study the rational maps in family. Usually, the family has only one parameter such that the family has only one free critical orbit (or has only one essentially, such as McMullen maps, see \S\ref{subsec-mcm}) since it is convenient to define the corresponding connected locus. Recall that a holomorphic family of rational maps $f_\lambda:\EC\to\EC$ with $\lambda\in\Lambda$ is said to have only one \textit{free} critical orbit, if there exists a critical point $c_\lambda$ of $f_\lambda$ such that for any $\widetilde{c}_\lambda\in\Crit(f_\lambda)\subt\{c_\lambda\}$, the forward orbit of $\widetilde{c}_\lambda$ is either finite or has a nonempty intersection with the forward orbit of $c_\lambda$.

\subsection{The generalized renormalization transformations have no Herman rings}

Let
\begin{equation*}\label{Umn}
S_\lambda(z) =\left(\frac{(z+\lambda-1)^d+(\lambda-1)(z-1)^d}{(z+\lambda-1)^d-(z-1)^d}\right)^d,
\end{equation*}
where $d\geq 2$ is an integer and $\lambda\neq 0$ is a complex parameter. This family of rational maps are actually the renormalization transformation of the generalized diamond hierarchical Potts model \cite{Qia15}. The special case named \emph{standard diamond lattice} ($d=2$) was first studied in \cite{DSI83}.

\begin{prop}
The renormalization transformation $S_\lambda$ has no Herman rings.
\end{prop}

\begin{proof}
For every $\lambda\in\C\subt\{0\}$, we have $S_\lambda=T_\lambda\circ T_\lambda$, where
\begin{equation*}
T_\lambda(z)=\Big(\frac{z+\lambda-1}{z-1}\Big)^d.
\end{equation*}
A directly calculation shows that the set of all critical points of $T_\lambda$ is $\Crit (T_\lambda)=\{1,1-\lambda\}$ and both with multiplicity $d-1$.
Under the iteration of $T_\lambda$, we have the following critical orbits:
\begin{equation*}
1\mapsto\infty\mapsto 1\mapsto\infty\mapsto \cdots \text{~~and~~}
1-\lambda\mapsto 0\mapsto (1-\lambda)^d\mapsto\cdots.
\end{equation*}
From the first orbit, we know that $1$ lies in the Fatou set of $T_\lambda$. This means that there exists at most one infinite critical orbit in $J(T_\lambda)=J(S_\lambda)$. By Theorem \ref{thm-holo-fami}, it follows that $T_\lambda$ and hence $S_\lambda$ has no Herman rings.
\end{proof}

\subsection{The McMullen maps have no Herman rings}\label{subsec-mcm}

In this subsection, we give another proof of the nonexistence of Herman rings of McMullen maps by using Theorem \ref{thm-holo-fami}.
For each $m,\ell\geq 2$, let
\begin{equation}\label{eq-McMullen-2}
F_\lambda(z)=z^m+\lambda/z^\ell
\end{equation}
be the McMullen family with parameter $\lambda\in\C\subt\{0\}$.

\begin{prop}[{Xiao and Qiu, \cite{XQ10}}]\label{thm-McM}
The McMullen maps have no Herman rings.
\end{prop}

\begin{proof}
For any $\lambda\neq 0$, let $\phi(z)=z^{m+\ell}/\lambda$. We define a new map
\begin{equation*}
G_\lambda(w)=\lambda^{m-1} w^m\Big(1+\frac{1}{w}\Big)^{m+\ell}.
\end{equation*}
Then one can check easily that $G_\lambda\circ \phi=\phi\circ F_\lambda$, i.e. $F_\lambda$ is semi-conjugated to $G_\lambda$ by $\phi$. A direct calculation shows that $G_\lambda$ has exactly $3$ critical points $0$, $\infty$ and $\ell/m$ (counted without multiplicity). Note that $0$ and $\infty$ are both mapped to $\infty$ and $\infty$ is a superattracting fixed point of $G_\lambda$. This means that $G_\lambda$ has at most one critical orbit in the Julia set of $G_\lambda$. By Theorem \ref{thm-holo-fami}, $G_\lambda$ has no Herman rings.

If the critical point $\ell/m$ is contained in the Fatou set of $G_\lambda$, then all the critical points of $F_\lambda$ are contained in the Fatou set of $F_\lambda$ and hence $F_\lambda$ has no Herman rings. Suppose that $\ell/m$ is contained in the Julia set of $G_\lambda$. Then besides the superattracting basin centered at $\infty$, the periodic Fatou components of $G_\lambda$ can only be Siegel disks. This means that all the Fatou components of $G_\lambda$ are either simply connected or infinitely connected. Since $F_\lambda$ is semi-conjugated to $G_\lambda$ by $\phi$, it follows that each Fatou component of $F_\lambda$ is either simply connected or infinitely connected. Therefore, $F_\lambda$ has no Herman rings.
\end{proof}



\begin{thebibliography}{99}

\bibitem[BB99]{BB99}B. Bam\'{o}n and J. Bobenrieth, The rational maps $z\mapsto 1+1/\omega z^d$ have no Herman rings, \textit{Proc. Amer. Math. Soc.} \textbf{127} (1999), no. 2, 633-636.

\bibitem[BF14]{BF14}B. Branner and N. Fagella, \textit{Quasiconformal Surgery in Holomorphic Dynamics}, Cambridge studies in advanced mathematics, \textbf{141}, Cambridge University Press, 2014.

\bibitem[DSI83]{DSI83}B. Derrida, L. De Seze, C. Itzykson. Fractal structure of zeros in hierarchinal models, \textit{J. Stat. Phys.} \textbf{33} (1983), no. 3, 559-569.

\bibitem[DLU05]{DLU05}R. L. Devaney, D. Look and D. Uminsky, The escape trichotomy for singularly perturbed rational maps, \textit{Indiana Univ. Math. J.}  \textbf{54} (2005), no. 6, 1621-1634.

\bibitem[DR13]{DR13}R. L. Devaney and E. D. Russell, Connectivity of Julia sets for singularly perturbed rational maps, \emph{Chaos, CNN, Memristors and Beyond}, World Scientific, 2013, pp. 239-245.

\bibitem[DF04]{DF04}P. Dom\'{\i}nguez and N. Fagella, Existence of Herman rings for meromorphic functions, \textit{Complex Var. Theory Appl.} \textbf{49} (2004), no. 12, 851-870.

\bibitem[FP12]{FP12}N. Fagella and J. Peter, On the configuration of Herman rings of meromorphic functions, \textit{J. Math. Anal. Appl.} \textbf{394} (2012), no. 2, 458-467.

\bibitem[G\'{S}03]{GS03}J. Graczyk and G. \'{S}wi\c{a}tek, Siegel disk with critical points in their boundaries,  \textit{Duke Math. J.} \textbf{119} (2003), no. 1, 189-196.

\bibitem[Her79]{Her79}M. Herman, Sur la conjugaison diff\'{e}rentiable des diff\'{e}omorphismes du cercle \`{a} des rotations, \textit{Publ. Math. I.H.E.S.} \textbf{49} (1979), 5-233.

\bibitem[Lyu86]{Lyu86}M. Lyubich, Dynamics of rational transforms: The topological picture, \textit{Russ. Math. Surv.} \textbf{41} (1986), no. 4, 35-95.

\bibitem[dMvS93]{dMvS93}W. de Melo and S. van Strien, \textit{One-Dimensional Dynamics}, Ergebnisse der Mathematik und ihrer Grenzgebiete, 3. Folge Band 25, Springer-Verlag, 1993.

\bibitem[Mil00]{Mil00}J. Milnor, On rational maps with two critical points, \emph{Experimental Math.} \textbf{9} (2000), no. 4, 481-522.

\bibitem[Mil06]{Mil06}J. Milnor, \textit{Dynamics in One Complex Variable: Third Edition}, Annals of Mathematics Studies, \textbf{160}, Princeton Univ. Press, Princeton, NJ, 2006.

\bibitem[Nay16]{Nay16}T. Nayak, Herman rings of meromorphic maps with an omitted value, \textit{Proc. Amer. Math. Soc.} \textbf{144} (2016), no. 2, 587-597.

\bibitem[Qia15]{Qia15}J. Qiao, Julia sets and complex singularities of free energies, \textit{Mem. Amer. Math. Soc.} \textbf{234} (2015), no. 1102, vi+89 pp.

\bibitem[QWY12]{QWY12}W. Qiu, X. Wang and Y. Yin, Dynamics of McMullen maps, \textit{Adv. Math.} \textbf{229} (2012), no. 4, 2525-2577.

\bibitem[Shi87]{Shi87}M. Shishikura, On the quasiconformal surgery of rational functions, \textit{Ann. Sci. \'{E}cole Norm. Sup.}  \textbf{20} (1987), no. 4, 1-29.

\bibitem[Shi06]{Shi06}M. Shishikura, Complex dynamics and quasiconformal mappings, as a supplemental chapter in ``\textit{Lectures on Quasiconformal Mappings}", Ed. by C. J. Earle and I. Kra, Amer. Math. Soc., Providence, RI (2006), pp. 119-141.

\bibitem[Shi09]{Shi09}M. Shishikura, The connectivity of the Julia set and fixed points, in \emph{Complex Dynamics, Families and friends}, Ed. by D. Schleicher, A K Peters, Wellesley, MA (2009), pp. 257-276.

\bibitem[Ste06]{Ste06}N. Steinmatz, On the dynamics of the McMullen family $R(z)=z^m+\lambda/z^\ell$, \textit{Conform. Geom. Dyn.}  \textbf{10} (2006), 159-183.

\bibitem[Sul85]{Sul85}D. Sullivan, Quasiconformal homeomorphisms and dynamics I: Solution of the Fatou-Julia problem on wandering domains, \textit{Ann. of Math.} \textbf{122} (1985), no. 3, 401-418.

\bibitem[Tan97]{Tan97}L. Tan, Branched coverings and cubic Newton maps, \textit{Fund. Math.} \textbf{154} (1997), no. 3, 207-260.

\bibitem[XQ10]{XQ10}Y. Xiao and W. Qiu, The rational maps $F_\lambda(z)=z^m+\lambda/z^d$ have no Herman rings, \textit{Proc. Indian Acad. Sci.} (\textit{Math. Sci.}), \textbf{120} (2010), no. 4, 403-407.

\bibitem[XY16]{XY16}Y. Xiao and F. Yang, Singular perturbations of unicritical polynomials with two parameters, \textit{Ergod. Th. $\&$ Dynam. Syst.}, available on http://dx.doi.org/10.1017/etds.2015.114, 2016.

\bibitem[YZ01]{YZ01}M. Yampolsky and S. Zakeri, Mating Siegel quadratic polynomials, \textit{J. Amer. Math. Soc.} \textbf{14} (2001), no. 1, 25-78.

\bibitem[YZ14]{YZ14}F. Yang and J. Zeng, On the dynamics of a family of generated renormalization transformations, \textit{J. Math. Anal. Appl.} \textbf{413} (2014), no. 1, 361-377.

\bibitem[Yoc02]{Yoc02}J.-C. Yoccoz, Analytic linearization of circle diffeomorphisms, in \textit{Dynamical systems and small divisors (Cetraro, 1998)}, vol. 1784, Lecture Notes in Math., Springer, Berlin, 2002, pp. 125-173.

\bibitem[Zak10]{Zak10}S. Zakeri, On Siegel disks of a class of entire maps, \emph{Duke Math. J.} \textbf{152} (2010), no. 3, 481-532.

\bibitem[Zha11]{Zha11}G. Zhang, All bounded type Siegel disks of rational maps are quasi-disks, \textit{Invent. Math.} \textbf{185} (2011), no. 2, 421-466.

\bibitem[Zhe00]{Zhe00}J. Zheng, Remarks on Herman rings of transcendental meromorphic functions, \textit{Indian J. Pure Appl. Math.} \textbf{31} (2000), no. 7, 747-751.
\end{thebibliography}
\end{document}